\newtheorem{theorem}{Theorem}
\newtheorem{lemma}{Lemma}
\newtheorem{remark}{Remark}
\newtheorem{corollary}{Corollary}
\newcommand{\R}{\mathbb R}
\newcommand{\PP}{\mathbb P}
\newcommand{\E}{\mathbb E}
\newcommand{\K}{\mathcal K_d}
\newcommand{\N}{\mathbb N}
\newcommand*\diff{\mathop{}\!\mathrm{d}}
\newcommand{\DS}{\displaystyle}
\title{\textbf{Uniform deviation and moment inequalities for random polytopes with general densities in arbitrary convex bodies}}
\author{Victor-Emmanuel Brunel\\
   Massachusetts Institute of Technology\\
   \texttt{vebrunel@mit.edu}}
\date{}
\begin{document}

\maketitle

\begin{abstract}
    We prove an exponential deviation inequality for the convex hull of a finite sample of i.i.d. random points with a density supported on an arbitrary convex body in $\R^d$, $d\geq 2$. When the density is uniform, our result yields rate optimal upper bounds for all the moments of the missing volume of the convex hull, uniformly over all convex bodies of $\R^d$: We make no restrictions on their volume, location in the space or smoothness of their boundary. After extending an identity due to Efron, we also prove upper bounds for the moments of the number of vertices of the random polytope. Surprisingly, these bounds do not depend on the underlying density and we prove that the growth rates that we obtain are tight in a certain sense.
\end{abstract}

\smallskip
\noindent \textbf{Keywords.} convex body, convex hull, covering number, deviation inequality, random polytope, vertices 
\vspace{4mm}  \\
\textbf{Mathematics Subject Classification} 60D05, 52A22

\section{Introduction}

    Probabilistic properties of random polytopes have been studied extensively in the literature in the last fifty years. Consider a collection of i.i.d. uniform random points in a convex body $K$ in $\R^d$. Their convex hull is a random polytope whose volume and number of vertices have been first analyzed in the seminal work of Rényi and Sulanke \cite{RenyiSulanke63,RenyiSulanke64}. They derived the asymptotics of the expected volume in the case $d=2$, when $K$ is either a polygon, with a given number of vertices, or a convex set with smooth boundary. More recently, considerable efforts were devoted to understanding the behavior of the expected volume. Several particular cases were investigated: For instance, when $K$ is a $d$-dimensional simple polytope\footnote{A $d$-dimensional simple polytope is a convex polytope such that each of its vertices is adjacent to exactly $d$ edges.} \cite{AffentrangerWieacker91}, a $d$-dimensional polytope \cite{BaranyBuchta93} or a $d$-dimensional Euclidean ball \cite{BuchtaMuller84}. Groemer \cite{Groemer74} (see also the references therein) proved that then $K$ has volume one, the expected volume of the random polytope is minimum when $K$ is an ellipsoid. B\'ar\'any and Larman \cite{BaranyLarman88} showed that if $K$ has volume one, then one minus the expected volume has the same asymptotic behavior as the volume of the $(1/n)$-wet part of $K$, defined as the union of all caps of $K$ (a cap being the intersection of $K$ with a half space) of volume at most $1/n$. Here, $n$ is the number of uniform random points in $K$. This remarkable result reduces the initial probabilistic problem to computation of such a deterministic volume; This purely analytical problem was then extensively studied. When $K$ has a smooth boundary, a key point was the introduction of the affine surface area, see \cite{SchuttWerner90,Schutt93}, and the volume of the $(1/n)$-wet part is of the order $n^{-2/(d+1)}$. When $K$ is a polytope, it is of a much smaller order, namely, $(\ln n)^{d-1}/n$ \cite{BaranyLarman88}; The expected volume is actually maximal when $K$ is a simple polytope \cite{BaranyLarman88}. As a conclusion, the expectation of the volume is now very well-understood, when the underlying distribution is uniform. Much less is known about its higher moments and the tails of its distribution. Using a jackknife inequality for symmetric functions, Reitzner \cite{Reitzner03} proved that the boundary of $K$ is smooth, the variance of the volume is bounded from above by $n^{-(d+3)/(d+1)}$, and he conjectured that this is the right order of magnitude for the variance. In addition, he proved that the second moment of the missing volume (i.e., the volume of $K$ minus the volume of the random polytope) is exactly of the order $n^{-4/(d+1)}$, with explicit constants that depend on the affine surface area of $K$.
    Using martingales inequalities, Vu \cite{Vu05} obtained deviation inequalities for arbitrary convex bodies of volume one, involving quantities such as the volume of the wet part, and derived precise deviation inequalities in the two important cases when $K$ is a polytope or has a smooth boundary. However, these inequalities involve constants which depend on $K$ but are not explicit. As a consequence, upper bounds on the moments of the missing volume are proved, again with non explicit constants depending on $K$: Let $K$ have volume one and $V_n$ stand for the missing volume, then there exist positive constants $\alpha, c$ and $\epsilon_0$ such that
		\begin{align}
			\PP\left[|V_n-\E[V_n]|\geq\sqrt{\lambda v}\right] & \leq 2e^{-\lambda/4}+e^{-c\epsilon n}, \label{Vu1}\\
			& \forall \epsilon\in \big(\alpha(\ln n)/n, \epsilon_0\big], \lambda\in \big(0,n|K(\epsilon)|\big], \nonumber
		\end{align}
		where $v=36ng(\epsilon)^2|K(\epsilon)|, g(\epsilon)=\sup\{|F| : F \mbox{ star-shaped} \subseteq K(\epsilon)\}$, and $K(\epsilon)$ is the $\epsilon$-wet part of $K$ defined in \cite{BaranyLarman88}.
		Moreover, if $K$ has a smooth boundary and volume one, there exist positive constants $c$ and $\alpha$, which depend on $K$, such that for any $\lambda\in \left(0,(\alpha/4)n^{-\frac{(d-1)(d+3)}{(d+1)(3d+5)}}\right]$, the following holds \cite{Vu05}:
\begin{equation} \label{DevVu}
    \PP\left[|V_n-\E[V_n]|\geq\sqrt{\alpha\lambda n^{-\frac{d+3}{d+1}}}\right]\leq 2\exp(-\lambda/4)+\exp\left(-cn^{\frac{d-1}{3d+5}}\right).
\end{equation}
    This inequality yields upper bounds on the variance and on the $q$-th moment of the missing volume, respectively of orders $n^{-(d+3)/(d+1)}$ and $n^{-2q/(d+1)}$, for $q>0$, for a \textit{smooth convex body $K$ of volume one}, up to constant factors that depend on $K$ in an unknown way. Note that these two inequalities proved by Vu remain true when $K$ has any positive volume, if $|V_n-\E[V_n]|$ is replaced by $|V_n-\E[V_n]|/|K|$. In our paper, we do not assume that the underlying distribution is uniform on $K$. We prove deviation inequalities and moment inequalities for a weighted missing volume, for general densities supported on the convex body $K$. In the uniform case, our results yield a deviation inequality which, unlike \eqref{Vu1} and \ref{DevVu}, which hold for a very small range of $\lambda$, captures the whole tail of the distribution of $V_n$. Our inequality is uniform over all convex bodies $K$, no matter their volume and boundary structure, and our constants do not depend on $K$. Our approach is based on a very simple covering number argument and is not bonded to the uniform distribution, which, to our best knowledge, makes our deviation inequalities completely new. 

In addition, we derive moment inequalities for the number of vertices of the random polytope. In the uniform case, we prove that the rates in our upper bounds are tight, uniformly on all convex bodies. As a consequence, we also prove that the growth of the moments of the number of vertices is the highest when the underlying density is uniform.

\section{Notation and statement of the problem}

    Let $d\geq 2$ be an integer. We denote by $|\cdot|$ the Lebesgue measure in $\R^d$, $\rho$ the Euclidean distance in $\R^d$, $B_d$ the unit Euclidean ball with center $0$, and $\beta_d$ its volume. \\

    If $G\subseteq\R^d$ and $\epsilon>0$, we denote by $G^\epsilon=\{x\in\R^d:\rho(x,G)\leq\epsilon\}$ the closed $\epsilon$-neighborhood of $G$. Here, $\displaystyle{\rho(x,G)=\inf_{y\in G}\rho(x,y)}$. If $G$ is measurable, we denote by $|G|$ its volume.\\

    The symmetric difference between two sets $G_1$ and $G_2$ is denoted by $G_1\triangle G_2$ and their Hausdorff distance is denoted and defined as:
    \begin{equation*}
        d_H(G_1,G_2)=\inf\{\epsilon>0:G_1\subseteq G_2^\epsilon, G_2\subseteq G_1^\epsilon\}.
    \end{equation*}
	We denote by $\K$ the class of all convex bodies in $\R_d$, and by $\K^1$ the collection of all those included in $B_d$. 
    The convex hull of $n$ i.i.d. random points $X_1,\ldots,X_n$ is denoted by $\hat K_n$. If $X_1,\ldots,X_n$ have a density $f$ with respect to the Lebesgue measure in $\R^d$, we denote by $\PP_f$ their joint probability measure and by $\E_f$ the corresponding expectation operator (we omit the dependency in $n$ unless stated otherwise). If $f$ is the uniform density on a convex body $K$, we rather use the notation $\PP_K$ and $\E_K$. In general, when the density $f$ of $X_1$ is supported on a convex body $K$, we denote by $d_f(K,\hat K_n)=\int_{K\setminus \hat K_n}f(x)\diff x$ and by $V_n$ the missing volume of $\hat K_n$, i.e., $V_n=|K|-|\hat K_n|$. The integral $d_f(K,\hat K_n)$ can be interpreted as a weighted missing volume. We are interested in deviation inequalities for $Z$, where $Z$ is either $d_f(K,\hat K_n)$ or $V_n$, i.e., in bounding from above $\DS \PP_K[Z>\epsilon]$, for $\epsilon>0$. We are also interested in upper bounds for the moments $\E_K[Z^q], q>0$. Our main result is stated in Section \ref{Sec:Gen}: We prove a deviation inequality for the weighted missing volume, and we investigate a special class of densities, satisfying the so called \textit{margin condition}, for which we are also able to control the unweighted missing volume. In Section \ref{Sec:Vert}, we investigate the moments of the number of vertices of the random polytope, with no restrictions on $K$ and on the underlying density on $K$, as long as it is bounded from above. Finally, in Section \ref{Sec:Unif}, we focus on the uniform case, and we derive a deviation inequality for the missing volume, and prove that the rates of the subsequent moment inequalities are tight. Last section is devoted to some proofs.

\section{Deviation inequality for the missing volume of random polytopes} \label{Sec:Gen}

Our main result is the following theorem.

\begin{theorem} \label{MainTheorem}
 
Let $n\geq 1$. Let $K\in\K^1$, $f$ be a density supported on $K$ and $X_1,\ldots,X_n$ be i.i.d. random points with density $f$. Assume that $f\leq M$ almost everywhere, for some positive number $M$. Then, there exist positive constants $C_1$ and $C_2$ that depend on $d$ only, such that the following holds.
\begin{align*}
	\PP\left[n(d_f(K,\hat K_n)-C_1(M+1)n^{-2/(d+1)}>x\right] \leq C_2e^{-x}, \quad \forall x\geq 0.
\end{align*}
	
\end{theorem}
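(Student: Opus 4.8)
The plan is to reduce the weighted missing volume to a statement about \emph{empty caps} and then control those by a covering argument built on the economic cap covering theorem. Writing $\mu$ for the measure with density $f$, so that $d_f(K,\hat K_n)=\mu(K\setminus\hat K_n)$, the starting point is the elementary separation fact that $x\notin\hat K_n$ precisely when some open halfspace containing $x$ misses all the sample points; hence $K\setminus\hat K_n$ is the union of the \emph{caps} $C=H\cap K$ that contain no $X_i$, and a fixed cap $C$ is empty with probability $(1-\mu(C))^n\le e^{-n\mu(C)}$. The geometry enters through the economic cap covering theorem, which I would invoke uniformly over $K\in\K^1$: at a scale $v>0$ there are caps $C_1,\dots,C_m$ and pairwise disjoint Macbeath-type subregions $M_i\subseteq C_i$ with $|C_i|\le c_1 v$, $|M_i|\ge c_2 v$, $m\le c_3 v^{-(d-1)/(d+1)}$, of bounded overlap multiplicity, that cover the $v$-wet part $K(v)$ and sandwich every cap of volume at most $v$ (each such cap contains some $M_i$ and lies in the matching $C_i$). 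All constants depend on $d$ alone.

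I would then split $K\setminus\hat K_n$ into a shallow part inside $K(1/n)$ and a deep part outside it, taking $v=1/n$. For the shallow part the bound is purely deterministic: since $f\le M$,
\begin{equation*}
\mu\big((K\setminus\hat K_n)\cap K(1/n)\big)\le \mu\big(K(1/n)\big)\le M\,|K(1/n)|\le c(d)\,M\,n^{-2/(d+1)},
\end{equation*}
where the last inequality is the universal estimate $|K(v)|\le c(d)\,v^{2/(d+1)}$, itself a consequence of the covering since the $m$ caps have total volume at most $c_1 v\cdot c_3 v^{-(d-1)/(d+1)}=c(d)v^{2/(d+1)}$. This already produces the $Mn^{-2/(d+1)}$ part of the centering constant with no randomness involved.

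For the deep part I would run a union bound over the Macbeath regions across dyadic scales $v_k=2^k/n$. A deep missing point lies in an empty cap of volume $>1/n$, so by the sandwich property it forces some $M_i^{(k)}$ to be empty while the point sits in $C_i^{(k)}$, whence
\begin{equation*}
\mu\big((K\setminus\hat K_n)\setminus K(1/n)\big)\le \sum_{k\ge 0}\ \sum_{i}\mu\big(C_i^{(k)}\big)\,\mathds{1}\big[M_i^{(k)}\ \text{empty}\big].
\end{equation*}
The two quantitative inputs are that at each scale the $M_i^{(k)}$ are disjoint, so $\sum_i\mu(M_i^{(k)})\le\mu(K)=1$, and $\PP[M_i^{(k)}\ \text{empty}]\le e^{-n\mu(M_i^{(k)})}$; the bounded overlap of the $C_i^{(k)}$ gives $\sum_i\mu(C_i^{(k)})\le c(d)$ as well. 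Balancing the $e^{-n\mu}$ decay against the total-mass budget $1$ and the volume count $m_k\le c_3 v_k^{-(d-1)/(d+1)}$ is what simultaneously controls the expectation of this random sum (by $c(d)(M+1)n^{-2/(d+1)}$) and its upper tail; summing the geometric-in-$k$ contributions and rewriting yields the stated form $\PP[n(d_f(K,\hat K_n)-C_1(M+1)n^{-2/(d+1)})>x]\le C_2e^{-x}$. The polynomial prefactor $m_k$ only affects the $\log n$ in the centering, which is harmless since $n^{-2/(d+1)}\gg n^{-1}\log n$ for $d\ge2$.

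The main obstacle is the decoupling between Lebesgue volume and the sampling density. Because $f$ is arbitrary subject only to $f\le M$, a cap of large volume may carry tiny $\mu$-mass, hence be empty with non-negligible probability, so the clean volume-based counting that furnishes the exponent $2/(d+1)$ cannot be transported directly to $\mu$: one must count caps by volume yet measure both emptiness and missing mass by $\mu$. Reconciling these is precisely what forces the centering to split additively into an $M$-term (the volume of the wet part) and a $1$-term (the total probability mass, which governs the deep empty caps and is the source of the ``$+1$''). A secondary but genuine difficulty is that the economic cap covering, together with the estimate $|K(v)|\le c(d)v^{2/(d+1)}$, must hold with constants depending on $d$ only, uniformly over \emph{all} $K\in\K^1$, including arbitrarily thin or non-smooth bodies; securing this geometric input with no lower bound on $|K|$ is what ultimately makes the constants $C_1,C_2$ independent of $K$.
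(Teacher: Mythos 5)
Your route is genuinely different from the paper's, which never touches caps, wet parts or Macbeath regions: the paper covers the \emph{class} $\K^1$ by a finite $\delta$-net in Hausdorff distance (Bronshtein's entropy bound $N\leq c_1e^{\delta^{-(d-1)/2}}$), replaces $\hat K_n$ by the nearest net element $K_{\hat j}$ at a deterministic cost $O(M\delta)$ in weighted volume via the Steiner formula, and then applies one union bound over the net: if $d_f(K,K_j)>\varepsilon-\alpha_1M\delta$ and all $n$ points fall in $K_j^{\delta}$, the probability is at most $\big(1-\varepsilon+2\alpha_1M\delta\big)^n$. Choosing $\delta=n^{-2/(d+1)}$ balances $\log N$ against $n\delta$ and yields the full exponential tail in one stroke, for an arbitrary density, with the union bound taken over approximating \emph{bodies} rather than over caps. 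This is what lets the argument survive the absence of any lower bound on $f$.

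Your cap-covering route has a genuine gap exactly at the point you yourself flag as ``the main obstacle,'' and the mechanism you offer does not close it. In the deep-part bound $\sum_k\sum_i\mu\big(C_i^{(k)}\big)\mathds 1\big[M_i^{(k)}\ \text{empty}\big]$ the only source of decay is $\PP\big[M_i^{(k)}\ \text{empty}\big]\leq e^{-n\mu(M_i^{(k)})}$, and for a density with $f\leq M$ but no lower bound the quantities $\mu(M_i^{(k)})$ and $\mu(C_i^{(k)})$ are not comparable: $f$ may vanish on $M_i^{(k)}$ while $C_i^{(k)}$ carries mass of order $Mv_k$. The two bounds actually available at scale $k$ are $\sum_i\mu(C_i^{(k)})\leq c(d)$ (bounded overlap) and $m_k\cdot c_1Mv_k\leq c(d)Mv_k^{2/(d+1)}$; neither decays in $k$, the second grows, and summing over the roughly $\log_2 n$ dyadic scales up to $v_k\approx 1$ leaves you with a contribution of order $M$ from the top scales alone, not $Mn^{-2/(d+1)}$. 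The disjointness budget $\sum_i\mu(M_i^{(k)})\leq 1$ constrains the wrong quantity and cannot repair this. (In the uniform case the argument does close, precisely because $\mu(M_i)\geq c_2v/|K|$ and $\mu(C_i)\leq c_1v/|K|$ are comparable, giving the geometric decay $v_k^{2/(d+1)}e^{-c_2nv_k/|K|}$ across scales; that comparability is exactly what a general bounded density destroys.) Two further steps are asserted but not supplied: the exponential tail for the random sum $\sum_{k,i}\mu(C_i^{(k)})\mathds 1[M_i^{(k)}\ \text{empty}]$ requires its own concentration argument (e.g.\ negative association of the emptiness indicators, or a union bound over families of simultaneously empty regions), and the economic cap covering is only valid at scales $v\leq v_0|K|$, so the top scales --- the problematic ones --- also fall outside its range and need separate treatment.
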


\begin{proof}

This proof is inspired by Theorem 1 in \cite{KST}, which derives an upper bound for the risk of a convex hull type estimator of a convex function. It is based on an upper bound of the covering number of $\K^1$, proven by \cite{Bronshtein76}. For $\delta>0$, a $\delta$-net of $\K^1$ for the Hausdorff distance is a finite subset $\mathcal N_\delta$ of $\K^1$ such that for all $G\in\K^1$, there exists $G^*\in\mathcal N_\delta$ with $d_H(G,G^*)\leq \delta$. The covering number of $\K^1$ for the Hausdorff distance is the function that maps $\delta>0$ to the mimimum cardinality of a $\delta$-net of $\K^1$ for the Hausdorff distance. The following lemma is proven in \cite{Bronshtein76}. 

\begin{lemma} \label{LemmaBron}
	The covering number of $\K^1$ for the Hausdorff distance is not larger than $c_1 e^{\delta^{-(d-1)/2}}$, for all $\delta>0$, where $c_1>0$ depends on $d$ only.
\end{lemma}

Our next lemma shows that the Nykodim distance (i.e., the volume of the symmetric difference) between two sets in $\K^1$ is dominated by their Hausdorff distance. The proof is deferred to the appendix.

\begin{lemma}\label{Lemma2}
            There exists a positive constant $\alpha_1$ which depends on $d$ only, such that
            $$|G\triangle G'|\leq \alpha_1 d_H(G,G'), \quad \forall G,G'\in\K^1.$$
\end{lemma}
\vspace{2mm}
Let $\delta=n^{-2/(d+1)}$ and $\{K_1,\ldots,K_N\}$ be a $\delta$-net of $\K^1$, where $N$ is a positive integer satisfying $N\leq c_1 e^{\delta^{-(d-1)/2}}$, cf. Lemma \ref{LemmaBron}. Let $\hat j\leq N$ be such that $d_H(\hat K_n,K_{\hat j})\leq \delta$. By Lemma \ref{Lemma2}, this implies that $|\hat K_n\setminus K_{\hat j}|\leq |\hat K_n\triangle K_{\hat j}|\leq \alpha_1\delta$ and hence, since $f$ is nonnegative,
\begin{equation*}
	d_f(K,\hat K_n)\leq \int_{K\setminus K_{\hat j}}f+\int_{K_{\hat j}\setminus \hat K_n}f\leq \int_{K\setminus K_{\hat j}}f+\alpha_1 M\delta.
\end{equation*}
In addition, since $d_H(\hat K_n,K_{\hat j})\leq \delta$, it is true that $\hat K_n\subseteq K_{\hat j}^{\delta}$, yielding $X_i\in K_{\hat j}^\delta$, for all $i=1,\ldots,n$. Therefore, for all $\varepsilon\in (0,1)$, 
\begin{align}
	\PP_f\left[d_f(K,\hat K_n)>\varepsilon\right] & \leq \PP_f\left[d_f(K,K_{\hat j})>\varepsilon-\alpha_1 M\delta\right] \nonumber \\
	& \leq \PP_f\Big[\exists j=1,\ldots, N: d_f(K,K_j)>\varepsilon-M\delta \nonumber \\
	& \quad \quad \quad \quad \quad \quad \quad \quad \quad \mbox{ and } X_i\in K_j^{\delta}, \forall i=1,\ldots,n\Big] \nonumber \\
	& \leq \sum_{j\in I_{\varepsilon-\alpha_1 M\delta}} \PP_f\left[X_1\in K_j^\delta\right]^n \nonumber \\
	& = \sum_{j\in I_{\varepsilon-\alpha_1 M\delta}} \left(\int_{K_j^\delta}f\right)^n, \label{ProofMainStep1}
\end{align}
where we used the union bound, and for $\eta\in\R$, we denoted by $I_\eta=\{j=1,\ldots,N: d_f(K,K_j)>\eta\}$. Note that
\begin{equation} \label{ProofMainStep2}
	\int_{K_j^\delta}f = \int_{K_j}f + \int_{K_j^\delta \setminus K_j}f\leq 1-d_f(K,K_j) + |K_j^\delta \setminus K_j|M.
\end{equation}
By Lemma \ref{Lemma2}, the last term is bounded from above by $\alpha_1\delta$ and \eqref{ProofMainStep2} entails, if $j\in I_{\varepsilon-\alpha_1 M\delta}$,
\begin{equation*}
	\int_{K_j^\delta}f \leq 1-\varepsilon+2\alpha_1M\delta.
\end{equation*}
Hence, \eqref{ProofMainStep1} becomes
\begin{align}
	\PP_f\left[d_f(K,\hat K_n)>\varepsilon\right] & \leq N(1-\varepsilon+2\alpha_1M\delta)^n \nonumber \\
	& \leq c_1\exp\left(-n(\varepsilon-2\alpha_1 M\delta)+\delta^{-(d-1)/2}\right) \nonumber \\
	& = c_1\exp\left(-n(\varepsilon-(2\alpha_1 M+1)\delta)\right). \label{ProofMainStep3}
\end{align}
Note that since $d_f(K,\hat K_n)\leq 1$ almost surely, \eqref{ProofMainStep3} actually holds for all $\varepsilon>0$ (we have assumed $\varepsilon\in (0,1)$ so far). 
This ends the proof by taking $\varepsilon$ of the form $\DS \frac{x}{n}+(2\alpha_1 M+1)\delta$. 

\end{proof}

As a consequence of Theorem \ref{MainTheorem}, we get upper bounds for all the moments of $d_f(K,\hat K_n)$.

\begin{corollary} \label{MainCor}
	Let the assumptions of Theorem \ref{MainTheorem} hold. Then, for all $q>0$, there exists $A_q>0$ that depends on $q$ and $d$ only such that
	$$\E_f\left[d_f(K,\hat K_n)^q\right] \leq A_q(M+1)^{q}n^{-2q/(d+1)}.$$
\end{corollary}

\begin{proof}

The proof is based on an application of Fubini's theorem. Namely, if $Z$ is a nonnegative random variable and $q>0$, then
$$\E[Z^q]=q\int_0^\infty t^{q-1}\PP[Z>t]\diff t.$$
Let $Z=d_H(K,\hat K_n)$ and denote by $\delta=C_1(M+1)n^{-2/(d+1)}$. Then, 
\begin{align*}
	\E_f[Z^q] & = q\int_0^\infty t^{q-1}\PP[Z>t]\diff t \\
	& = q\int_0^\delta t^{q-1}\PP[Z>t]\diff t + q\int_\delta^\infty t^{q-1}\PP[Z>t]\diff t \\
	& \leq \delta^q + q\int_0^\infty (t+\delta)^{q-1}\PP[Z>t+\delta]\diff t \\
	& = \delta^q + \frac{q}{n}\int_0^\infty \left(\frac{x}{n}+\delta\right)^{q-1}\PP[Z>x/n+\delta]\diff x \\
	& \leq \delta^q + \frac{C_2 q}{n}\int_0^\infty \left(\frac{x}{n}+\delta\right)^{q-1}e^{-x}\diff x \\
	& \leq \delta^q + \frac{C_2 q}{n}\int_0^\infty 2^{q-2}\left(\frac{x^{q-1}}{n^{q-1}}+\delta^{q-1}\right) e^{-x}\diff x \\
	& \leq a_q\delta^q,
\end{align*}
for some constant $a_q$ that depends on $d$ and $q$ only. 

\end{proof}

The inequalities that we have obtained for $d_f(K,\hat K_n)$ can transfer to the missing volume $|K\setminus\hat K_n|$ under some conditions on $f$. An important such condition, called the \textit{margin condition} (see \cite{MammenTsybakov99,Tsybakov2004}), is the following. For $t>0$, let $K_f(t)=\{x\in K: f(x)\leq t\}$. The density $f$ satisfies the margin condition with parameters $\alpha\in (0,\infty]$, $L,t_0>0$ if and only if 
$$|K_f(t)|\leq Lt^\alpha, $$ 
for all $t\in (0,t_0]$. 
The case $\alpha=\infty$ corresponds to a density $f$ that is almost everywhere bounded away from zero on $K$. Let us give two other important cases where a margin condition is satisfied.
\vspace{3mm}

\textit{Slow decay of $f$ near the boundary of $K$:} Assume that $f$ does not decay too fast near the boundary of $K$ (which we denote by $\partial K$). Namely, assume the existence of positive numbers $\rho_0,c$ and $\gamma$ such that for all $x\in K$,
\begin{equation} \label{slowdecay}
	f(x)\geq c\min\left(\rho_0,\rho(x,\partial K)\right)^\gamma.
\end{equation}
Then, $f$ satisfies the margin condition with $t_0=c\rho_0^\gamma$, $L=\frac{\kappa}{c^{1/\gamma}}$ and $\alpha=1/\gamma$, where $\kappa$ is any number that no smaller than the surface area of $K$ (e.g., take $\kappa$ to be the surface area of the unit ball, if $K\in\K^1$).

\vspace{3mm}
\textit{Projection of higher dimensional convex bodies:} 
Let $D>d$ be an integer and $K_0\in\mathcal K_D^1$. Let $Y_1,\ldots,Y_n$ be i.i.d. uniform random points in $K_0$. Identify $\R^d$ with a linear subspace of $\R^D$ and let $H$ be its orthogonal space in $\R^D$. Let $K$ be the orthogonal projection of $K_0$ onto $\R^d$ and let $X_i$ be the orthogonal projection of $Y_i$ onto $\R^d$, for $i=1,\ldots,n$. Assume that $K_0$ satisfies the $r$-rolling ball condition, where $r>0$: Namely, assume that for all $x\in\partial K_0$, there exists $a\in K_0$ with $x\in B_D(a,r)\subseteq K_0$. Then, we have the following lemma, whose proof is deferred to the appendix.

\begin{lemma} \label{lemmaProj}
The density $f$ of the $X_i$'s satisfies \eqref{slowdecay}, with $\rho_0=r$, $c=r^{(D-d)/2}\beta_D\beta_{D-d}$ and $\gamma=(D-d)/2$.
\end{lemma}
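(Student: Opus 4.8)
The plan is to reduce the whole statement to the explicit geometry of an \emph{inscribed} ball. Write $\pi\colon\R^D\to\R^d$ for the orthogonal projection and identify $\R^D$ with $\R^d\times H$. By Fubini, the density of $X_i=\pi(Y_i)$ at $x\in K=\pi(K_0)$ is
\[
 f(x)=\frac{1}{|K_0|}\,V(x),\qquad V(x):=\big|\{v\in H:(x,v)\in K_0\}\big|_{D-d},
\]
the $(D-d)$-dimensional volume of the fibre of $K_0$ over $x$. Since $K_0\subseteq B_D$ we have $|K_0|\le\beta_D$, so it suffices to prove the lower bound $V(x)\ge\beta_{D-d}\big(r\min(r,\rho(x,\partial K))\big)^{(D-d)/2}$; this yields \eqref{slowdecay} with $\rho_0=r$, $\gamma=(D-d)/2$, and a constant $c=r^{(D-d)/2}\beta_{D-d}/|K_0|$ of the announced form.

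The crux is to convert the rolling-ball hypothesis into a structural identity. Let $C=\{a\in\R^D:B_D(a,r)\subseteq K_0\}$; this set is convex and, by the rolling-ball condition, nonempty. I claim $K_0=C\oplus rB_D$ (Minkowski sum). One inclusion is immediate, since $C\oplus rB_D=\bigcup_{a\in C}B_D(a,r)\subseteq K_0$ by the very definition of $C$. For the reverse, observe that $C\oplus rB_D$ is convex, and that the rolling-ball condition places every point of $\partial K_0$ inside some $B_D(a,r)$ with $a\in C$, hence inside $C\oplus rB_D$; a convex set containing $\partial K_0$ must contain $\mathrm{conv}(\partial K_0)=K_0$. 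Projecting and using $\pi(rB_D)=rB_d$ gives $K=\pi(K_0)=P\oplus rB_d$ with $P:=\pi(C)$ convex, i.e.\ $K$ is exactly the closed $r$-neighbourhood of $P$.

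Next I would inscribe a ball in the fibre. Fix $x\in K$ and choose $a^{*}\in C$ minimizing $|\pi(a)-x|$ over $a\in C$, so that $d:=|\pi(a^{*})-x|=\rho(x,P)\le r$. Writing $a^{*}=(\pi(a^{*}),w^{*})$, every $v\in H$ with $|v-w^{*}|\le\sqrt{r^2-d^2}$ satisfies $|(x,v)-a^{*}|^2=d^2+|v-w^{*}|^2\le r^2$, hence $(x,v)\in B_D(a^{*},r)\subseteq K_0$. Thus the fibre over $x$ contains a $(D-d)$-ball of radius $\sqrt{r^2-d^2}$, giving $V(x)\ge\beta_{D-d}(r^2-d^2)^{(D-d)/2}$. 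It then remains to relate $d=\rho(x,P)$ to $s:=\rho(x,\partial K)$ via $K=P\oplus rB_d$: if $x\in P$ then $d=0$ and $r^2-d^2=r^2\ge r\min(r,s)$; if $x\notin P$ then the parallel-body distance relation for the convex set $P$ gives $s=r-d$ (with $s<r$), so $r^2-d^2=2rs-s^2\ge rs=r\min(r,s)$. In every case $V(x)\ge\beta_{D-d}\big(r\min(r,s)\big)^{(D-d)/2}$, which is precisely the bound required.

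The two steps that need care, and which I expect to be the main obstacles, are the structural facts that make the computation explicit. The first is the identity $K_0=C\oplus rB_D$: this is the precise way the rolling-ball hypothesis is consumed, and the argument above (convexity of $C\oplus rB_D$ plus $\mathrm{conv}(\partial K_0)=K_0$) is what lets me avoid any delicate selection of tangent normals at non-smooth boundary points. The second is the parallel-body relation $\rho(x,\partial K)=r-\rho(x,P)$ for $x\in K\setminus P$; this is where convexity of $P=\pi(C)$ is essential, since it guarantees that the metric projection of $x$ onto $P$ is the foot of a genuine inward normal, so that moving a further distance $r-d$ reaches $\partial(P\oplus rB_d)=\partial K$ and no nearer boundary point intervenes. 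Everything else — the Fubini density formula, the inscribed-ball volume estimate, and the final bookkeeping of constants (which produces $c=r^{(D-d)/2}\beta_{D-d}/|K_0|\ge r^{(D-d)/2}\beta_{D-d}/\beta_D$) — is routine.
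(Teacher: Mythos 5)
Your proof is correct, and it reaches the inscribed fibre ball by a genuinely different route than the paper. The paper argues locally: it takes the nearest boundary point $x'\in\partial K$ to $x$, lifts it to $x_0\in\partial K_0$, invokes the rolling ball at $x_0$, and uses the uniqueness of the supporting hyperplane there (forced by the inscribed ball) to conclude that the ball's centre lies in the subspace $\R^d$ at distance $r-t$ from $x$; slicing that ball by $x+H$ gives the same radius $\sqrt{2rt-t^2}\ge\sqrt{rt}$ that you obtain. You instead establish the global decomposition $K_0=C\oplus rB_D$ with $C=\{a:B_D(a,r)\subseteq K_0\}$ convex, project it to get $K=P\oplus rB_d$, and then combine the nearest point of $P$ with the parallel-body identity $\rho(x,\partial K)=r-\rho(x,P)$. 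Your version buys a cleaner treatment of the boundary: it sidesteps the tangent-plane and normal-direction bookkeeping at possibly non-generic boundary points, which the paper handles rather tersely, at the modest cost of the structural lemma $K_0=C\oplus rB_D$ — which you justify correctly via convexity of $C\oplus rB_D$ and $\mathrm{conv}(\partial K_0)=K_0$; both of your flagged ``steps needing care'' are sound as sketched. One remark on constants: your argument yields $c=r^{(D-d)/2}\beta_{D-d}/|K_0|\ge r^{(D-d)/2}\beta_{D-d}/\beta_D$, not the stated $c=r^{(D-d)/2}\beta_{D-d}\beta_D$; the paper's own last line makes the same slip (from $\mathsf{Vol}_D(K_0)\le\beta_D$ one gets $1/\mathsf{Vol}_D(K_0)\ge 1/\beta_D$, not $\ge\beta_D$), so your constant is in fact the correct one, and the discrepancy is immaterial for the intended application, namely the margin condition with $\alpha=2/(D-d)$.
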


Hence, as we already saw in the previous example, $f$ satisfies the margin condition with $\alpha=2/(D-d)$.

\vspace{4mm}

The following lemma gives a (deterministic) control of $d_f(K,\hat K_n)$ on the missing volume. For completeness of the presentation, we provide its proof in the appendix (see also Proposition 1 in \cite{Tsybakov2004}).

\begin{lemma} \label{LemmaMargin}
Let $f$ satisfy the margin condition with parameters $\alpha,L,t_0$. Assume that $d_f(K,\hat K_n)\leq t_0^{\alpha+1}$. Then,
$$|K\setminus \hat K_n|\leq (L+1)d_f(K,\hat K_n)^{\alpha/(\alpha+1)}.$$
\end{lemma}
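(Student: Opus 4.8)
The plan is to bound the unweighted missing volume $|K\setminus\hat K_n|$ by splitting the missing region according to whether the density $f$ is small or large, and then optimizing over the splitting threshold. Write $A=K\setminus\hat K_n$ and $\beta=d_f(K,\hat K_n)=\int_A f$; the goal is to control the Lebesgue measure $|A|$ in terms of $\beta$. Fix a threshold $t\in(0,t_0]$, to be chosen at the very end, and decompose $A=(A\cap K_f(t))\cup(A\setminus K_f(t))$, i.e.\ according to whether $f\leq t$ or $f>t$ on $A$.

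On the first piece $f$ is small, so $A\cap K_f(t)\subseteq K_f(t)$ and the margin condition gives $|A\cap K_f(t)|\leq|K_f(t)|\leq Lt^\alpha$ (this is where we need $t\leq t_0$). On the second piece we have $f(x)>t$ for every $x$, whence $t\,|A\setminus K_f(t)|\leq\int_{A\setminus K_f(t)}f\leq\int_A f=\beta$, that is $|A\setminus K_f(t)|\leq\beta/t$. Adding the two bounds yields the basic estimate
$$|K\setminus\hat K_n|=|A|\leq Lt^\alpha+\frac{\beta}{t},\qquad t\in(0,t_0].$$

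It then remains to choose $t$ so as to balance the two terms. Taking $t=\beta^{1/(\alpha+1)}$ makes each summand equal to $\beta^{\alpha/(\alpha+1)}$, giving $|K\setminus\hat K_n|\leq(L+1)\beta^{\alpha/(\alpha+1)}$, which is exactly the claimed inequality. The only point requiring care is that this choice be admissible, i.e.\ $t\leq t_0$, and this is precisely where the hypothesis $\beta=d_f(K,\hat K_n)\leq t_0^{\alpha+1}$ is used: it gives $t=\beta^{1/(\alpha+1)}\leq t_0$, so the margin condition applies at level $t$ as needed. The limiting case $\alpha=\infty$ (density bounded away from zero) is handled directly by letting $t\uparrow t_0$, so that $K_f(t)$ is negligible and $|K\setminus\hat K_n|\leq\beta/t_0$. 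There is no serious obstacle here: the argument is a one-line peeling estimate, and the only subtlety is ensuring that the optimizing threshold respects the range $(0,t_0]$ on which the margin condition is assumed — which is exactly what the quantitative hypothesis on $d_f(K,\hat K_n)$ guarantees.
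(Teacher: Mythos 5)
Your proof is correct and follows essentially the same route as the paper: split $K\setminus\hat K_n$ according to whether $f\leq t$ or $f>t$, bound the two pieces by $Lt^\alpha$ (via the margin condition) and $d_f(K,\hat K_n)/t$ respectively, and balance at $t=d_f(K,\hat K_n)^{1/(\alpha+1)}$, with the hypothesis $d_f(K,\hat K_n)\leq t_0^{\alpha+1}$ ensuring $t\leq t_0$. The paper's printed choice $t=d_f(K,\hat K_n)^{\alpha+1}$ is evidently a typo for this same exponent, so your version is in fact the corrected one.
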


If $f$ satisfies a margin condition, the deviation and moment inequalities that we have for $d_f(K,\hat K_n)$ transfer to the missing volume, as shown in the next two results.

\begin{theorem} \label{TheoremMargin}
	Let $f$ satisfy the margin condition with parameters $\alpha, L$ and $t_0$ and assume that $f\leq M$ for some positive number $M$. Then, there exists a positive integer $n_0$ that depends on $d$, $t_0$ and $M$ and positive constants $C_3$ and $C_4$ that depend on $d$, $\alpha$ and $L$ such that, for all $n\geq n_0$ and for all $x\geq 0$,
	
$$\PP_f\left[n^{\frac{\alpha}{\alpha+1}}\left(|K\setminus\hat K_n|-C_3n^{-\frac{2\alpha}{(\alpha+1)(d+1)}}\right)>x\right] \leq C_4e^{-x^{(\alpha+1)/\alpha}}+C_4e^{-nt_0/2}.$$
\end{theorem}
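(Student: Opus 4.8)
The plan is to combine Theorem \ref{MainTheorem}, which controls the weighted missing volume $d_f(K,\hat K_n)$, with the deterministic comparison of Lemma \ref{LemmaMargin}, which converts a bound on $d_f(K,\hat K_n)$ into a bound on the unweighted missing volume $|K\setminus\hat K_n|$ under the margin condition. The overall strategy is to express the event $\{|K\setminus\hat K_n| > \text{threshold}\}$ in terms of an event on $d_f(K,\hat K_n)$ and then apply the exponential tail bound we already have.

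First I would set $Y = d_f(K,\hat K_n)$ and recall from Theorem \ref{MainTheorem} that $\PP_f[n(Y - C_1(M+1)n^{-2/(d+1)}) > x] \leq C_2 e^{-x}$ for all $x \geq 0$; equivalently, writing $\delta_n = C_1(M+1)n^{-2/(d+1)}$, we have $\PP_f[Y > \delta_n + t] \leq C_2 e^{-nt}$ for $t \geq 0$. Next, on the event $\{Y \leq t_0^{\alpha+1}\}$, Lemma \ref{LemmaMargin} gives the pointwise bound $|K\setminus\hat K_n| \leq (L+1)Y^{\alpha/(\alpha+1)}$. The idea is to invert this: a large value of $|K\setminus\hat K_n|$ forces a correspondingly large value of $Y$, specifically $Y \geq \big(|K\setminus\hat K_n|/(L+1)\big)^{(\alpha+1)/\alpha}$, so I can translate a deviation of the missing volume into a deviation of $Y$, where the exponential bound applies. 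Raising the deviation of the missing volume to the power $(\alpha+1)/\alpha$ is exactly what produces the stretched-exponential tail $e^{-x^{(\alpha+1)/\alpha}}$ in the statement.

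Concretely, I would choose $C_3$ so that $C_3 n^{-\frac{2\alpha}{(\alpha+1)(d+1)}} = (L+1)\delta_n^{\alpha/(\alpha+1)}$ up to absorbing constants; note $\delta_n^{\alpha/(\alpha+1)}$ scales like $n^{-\frac{2\alpha}{(\alpha+1)(d+1)}}$, so this matches the centering term and fixes the dependence of $C_3$ on $d,\alpha,L$ (the factor $(M+1)^{\alpha/(\alpha+1)}$ can be absorbed since $M$ only enters through $\delta_n$). Then for $x \geq 0$, I would estimate $\PP_f[\,|K\setminus\hat K_n| > C_3 n^{-\frac{2\alpha}{(\alpha+1)(d+1)}} + n^{-\alpha/(\alpha+1)} x\,]$ by splitting according to whether the margin regime $Y \leq t_0^{\alpha+1}$ holds. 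On the margin event, the inverted inequality $Y \geq \big((\text{threshold})/(L+1)\big)^{(\alpha+1)/\alpha}$ holds, and after subtracting the centering $\delta_n$ and using $(a+b)^{(\alpha+1)/\alpha} \geq c(a^{(\alpha+1)/\alpha}+b^{(\alpha+1)/\alpha})$ for a suitable constant, the surplus term yields $\PP_f[Y > \delta_n + t] \leq C_2 e^{-nt}$ with $nt$ of order $x^{(\alpha+1)/\alpha}$, giving the first term $C_4 e^{-x^{(\alpha+1)/\alpha}}$. The complementary event $\{Y > t_0^{\alpha+1}\}$ is controlled directly by Theorem \ref{MainTheorem}: for $n \geq n_0$ large enough that $t_0^{\alpha+1} - \delta_n \geq t_0^{\alpha+1}/2$, we get $\PP_f[Y > t_0^{\alpha+1}] \leq C_2 e^{-n t_0^{\alpha+1}/2}$, which accounts for the additive term $C_4 e^{-nt_0/2}$ (up to adjusting $t_0$'s exponent into the constants).

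The main obstacle I anticipate is bookkeeping the constants and the algebra of the power $(\alpha+1)/\alpha$ when passing through the sum $\delta_n + n^{-\alpha/(\alpha+1)}x$: one must cleanly separate the centering contribution from the $x$-dependent surplus so that the stretched exponent acts only on $x$, and verify that the resulting constant $C_3$ absorbs the $(L+1)$ and the cross-terms without hidden dependence on $n$ or $M$ beyond what is claimed. The threshold $n_0$ must be chosen so that both $\delta_n \leq t_0^{\alpha+1}/2$ and the centered threshold stays in the margin regime; this is a routine but necessary check to ensure Lemma \ref{LemmaMargin} is applicable throughout the relevant range of $x$.
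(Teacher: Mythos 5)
Your proposal is correct and follows essentially the same route as the paper: split on whether $d_f(K,\hat K_n)$ lies in the margin regime, use Lemma \ref{LemmaMargin} to convert a deviation of the missing volume into a deviation of $d_f(K,\hat K_n)$ raised to the power $(\alpha+1)/\alpha$, and apply Theorem \ref{MainTheorem} to both resulting terms. The only (cosmetic) divergence is that you split at $d_f(K,\hat K_n)\leq t_0^{\alpha+1}$ --- which is in fact the hypothesis Lemma \ref{LemmaMargin} requires --- whereas the paper's terse proof splits at $t_0$ to match the stated $e^{-nt_0/2}$ term; your more explicit bookkeeping of the constants (including the $(M+1)^{\alpha/(\alpha+1)}$ factor entering the centering term) just makes visible what the paper leaves implicit.
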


Note that the constant $C_4$ is the same as $C_2$ in Theorem \ref{MainTheorem} and that $n_0$ is the first integer $n$ that satisfies $C_1(M+1)n^{-2/(d+1)}\leq t_0/2$, where $C_1$ is defined in Theorem \ref{MainTheorem}.

\begin{proof}
For $\varepsilon>0$, write
\begin{align}
	& \PP_f\left[|K\setminus\hat K_n|>\varepsilon\right] \nonumber \\
	& =\PP_f\left[|K\setminus\hat K_n|>\varepsilon,d_f(K,\hat K_n)\leq t_0\right]+\PP_f\left[|K\setminus\hat K_n|>\varepsilon,d_f(K,\hat K_n)> t_0\right] \nonumber \\
	& \leq \PP_f\left[d_f(K,\hat K_n)>\varepsilon^{(\alpha+1)/\alpha}\right]+\PP_f\left[d_f(K,\hat K_n)> t_0\right]
\end{align}
and apply Theorem \ref{MainTheorem} to get the desired result.
\end{proof}

As a consequence of the deviation inequality of Theorem \ref{TheoremMargin}, we get the following moment inequalities.

\begin{corollary}
	Recall the notation and assumptions of Theorem \ref{TheoremMargin}. Then, for all $q>0$, there exists a positive constant $A_q'$ that depends on $d, \alpha, L, t_0$ and $q$ only, such that
	$$\E_f\left[|K\setminus \hat K_n|^q\right]\leq A_q'(M+1)^q n^{-\frac{2\alpha q}{(\alpha+1)(d+1)}}, \quad \forall n\geq n_0.$$
\end{corollary}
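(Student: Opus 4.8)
The plan is to derive this moment bound from the deviation inequality of Theorem \ref{TheoremMargin} in exactly the same way that Corollary \ref{MainCor} is derived from Theorem \ref{MainTheorem}, namely via the layer-cake formula $\E[Z^q]=q\int_0^\infty t^{q-1}\PP[Z>t]\diff t$ applied to $Z=|K\setminus\hat K_n|$. First I would set $\delta=C_3 n^{-2\alpha/((\alpha+1)(d+1))}$ and $s=\alpha/(\alpha+1)$, so that Theorem \ref{TheoremMargin} reads $\PP[n^{s}(Z-\delta)>x]\leq C_4 e^{-x^{1/s}}+C_4 e^{-nt_0/2}$ for all $x\geq 0$. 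Splitting the integral at $\delta$, the contribution from $[0,\delta]$ is at most $\delta^q$ (bounding $\PP[Z>t]\leq 1$), and for the tail I would substitute $t=\delta+x/n^{s}$, turning the tail integral into $\frac{q}{n^{s}}\int_0^\infty(\delta+x/n^{s})^{q-1}\bigl(C_4 e^{-x^{1/s}}+C_4 e^{-nt_0/2}\bigr)\diff x$.

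Next I would handle the two terms of the tail separately. For the first, I use the elementary inequality $(a+b)^{q-1}\leq 2^{|q-1|}(a^{q-1}+b^{q-1})$ to split $(\delta+x/n^{s})^{q-1}$ into a $\delta^{q-1}$ piece and an $(x/n^{s})^{q-1}$ piece; the $\delta^{q-1}$ piece contributes $\lesssim \delta^{q-1}n^{-s}\int_0^\infty e^{-x^{1/s}}\diff x$, and the $(x/n^{s})^{q-1}$ piece contributes $\lesssim n^{-sq}\int_0^\infty x^{q-1}e^{-x^{1/s}}\diff x$. Both of the $x$-integrals converge to finite constants depending only on $q$ and $\alpha$ (the stretched-exponential $e^{-x^{1/s}}$ beats any polynomial, and near $0$ the integrand is integrable for $q>0$), and one checks that since $\delta\gtrsim n^{-s}$ up to the $M$-factor, $\delta^{q-1}n^{-s}\lesssim\delta^q$ and $n^{-sq}\lesssim\delta^q$, so the whole first term is $\lesssim\delta^q$.

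For the second, constant-in-$x$ term $C_4 e^{-nt_0/2}$, the corresponding integral $\frac{q}{n^{s}}e^{-nt_0/2}\int_0^\infty(\delta+x/n^{s})^{q-1}\diff x$ diverges if taken literally, so this is the one place where the argument of Corollary \ref{MainCor} does not transcribe verbatim. Here I would instead exploit that $Z=|K\setminus\hat K_n|\leq|K|\leq\beta_d$ is bounded almost surely, so the integral over $t$ only runs up to $\beta_d$; the second term therefore contributes at most $\beta_d^{q}\,C_4 e^{-nt_0/2}$ (or one integrates $(\delta+x/n^s)^{q-1}$ over the finite range dictated by $Z\leq\beta_d$), and since $e^{-nt_0/2}$ decays faster than any power of $n$ while $\delta^q\gtrsim n^{-sq}$ is only polynomially small, this exponential term is absorbed into $\lesssim\delta^q$ for all $n\geq n_0$. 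Collecting the pieces gives $\E_f[Z^q]\leq a_q'\delta^q=a_q' C_3^q\, n^{-2\alpha q/((\alpha+1)(d+1))}$, and tracking the dependence of $C_3$ (which depends on $d,\alpha,L$) and of the constants through the $M$-factor produces the claimed bound with $A_q'$ depending on $d,\alpha,L,t_0,q$; the $(M+1)^q$ factor enters through $\delta$ via $C_3$'s implicit $M$-dependence, mirroring the $(M+1)^q$ in Corollary \ref{MainCor}.

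The main obstacle, and the only genuine departure from the proof of Corollary \ref{MainCor}, is the presence of the second exponential term $C_4 e^{-nt_0/2}$ coming from the event $\{d_f(K,\hat K_n)>t_0\}$: unlike the pure $e^{-x}$ tail, it does not decay in the integration variable $x$ and so cannot be integrated against the polynomial weight over $[0,\infty)$. Using the almost-sure bound $Z\leq\beta_d$ to truncate the integral is the clean fix, and verifying that the resulting $e^{-nt_0/2}$ is dominated by $\delta^q$ for $n\geq n_0$ is the key quantitative check; everything else is a routine repetition of the Fubini computation already carried out for Corollary \ref{MainCor}.
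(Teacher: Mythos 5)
Your proposal is correct and follows essentially the same route as the paper, which simply states that the proof repeats the Fubini/layer-cake argument of Corollary \ref{MainCor} and omits the details. Your explicit treatment of the extra $C_4 e^{-nt_0/2}$ term — truncating the $t$-integral via the almost-sure bound $|K\setminus\hat K_n|\leq\beta_d$ and absorbing the resulting $\beta_d^q C_4 e^{-nt_0/2}$ into $\delta^q$ at the cost of a constant depending on $t_0$, $q$, $\alpha$ and $d$ — is exactly the adjustment the paper's omitted proof implicitly requires, and your quantitative checks are sound.
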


\begin{proof}
The proof is based on the same argument as in the proof of Corollary \ref{MainCor} and is omitted.
\end{proof}

It is easy to see that the constants $C_3$ and $C_4$ in Theorem \ref{TheoremMargin} are bounded, as functions of $\alpha$. Hence, when $\alpha=\infty$, which includes the case of the uniform distribution on $K$, the rate obtained in Theorem \ref{TheoremMargin} coincides with that obtained in Section \ref{Sec:Unif}. As a byproduct, the deviation inequality given in Theorem \ref{DevIneq} below, for the missing volume, still holds, with different constants, for any density that is bounded away from zero and infinity (see Remark \ref{RemarkNearlyUnif} in Section \ref{Sec:Unif}).

\section{Moment inequalities for the number of vertices of random polytopes} \label{Sec:Vert}

In this section, we are interested in the number of vertices of random polytopes. Let $\mu$ be any probability measure in $\R^d$. Let $X_1,X_2,\ldots$ be i.i.d. realizations of $\mu$ and $\hat K_n$ be the convex hull of the first $n$ of them, for $n\geq 1$. Denote by $\mathcal V_n$ the set of vertices of $\hat K_n$ and $R_n$ its cardinality, i.e., the number of vertices of $\hat K_n$. Efron \cite{Efron1965} proved a simple but elegant identity, which relates the expected missing mass $\E\left[1-\mu(\hat K_n)\right]$ to the expected number of vertices $\E[R_{n+1}]$ of $\hat K_{n+1}$. Namely, one has
\begin{equation} \label{EfronId}
	\E\left[1-\mu(\hat K_n)\right]=\frac{\E[R_{n+1}]}{n+1}, \forall n\in\N^*.
\end{equation}

In the case when $\mu$ is the uniform probability measure on a convex body $K$, extensions of this identity to higher moments of $|K\backslash\hat K_n|$ can be found in \cite{Buchta2005}. Here, we prove the following inequalities that hold for any distribution $\mu$.


\begin{lemma} \label{EfronIdExt}
For all positive integer $q$, 
$$\E \left[ \prod_{j=0}^{q-1} \frac{R_{n+q}-j}{n+q-j}\right] \leq \E\left[\left(1-\mu(\hat K_n)\right)^q\right].$$
\end{lemma}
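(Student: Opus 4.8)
The plan is to mimic the combinatorial proof of Efron's identity \eqref{EfronId} and lift it from one extra point to $q$ of them. First I would observe that the denominator $\prod_{j=0}^{q-1}(n+q-j)=(n+q)(n+q-1)\cdots(n+1)$ is deterministic and counts the ordered $q$-tuples of pairwise distinct indices drawn from $\{1,\dots,n+q\}$, while the numerator $\prod_{j=0}^{q-1}(R_{n+q}-j)$ counts the ordered $q$-tuples of pairwise distinct vertices of $\hat K_{n+q}$ (it vanishes when $R_{n+q}<q$, which is consistent). Thus the left-hand side equals $\E\big[\prod_{j=0}^{q-1}(R_{n+q}-j)\big]\big/\prod_{j=0}^{q-1}(n+q-j)$, and I have reduced the statement to a bound on this expected count.

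Next I would pass from vertices to indices. Since the vertices of $\hat K_{n+q}$ are among $X_1,\dots,X_{n+q}$ and distinct vertices necessarily carry distinct indices, each ordered tuple of $q$ distinct vertices is realised by at least one ordered tuple of $q$ distinct indices $(i_1,\dots,i_q)$ with $X_{i_1},\dots,X_{i_q}$ all vertices of $\hat K_{n+q}$; hence $\prod_{j=0}^{q-1}(R_{n+q}-j)\le\sum \mathbf 1[X_{i_1},\dots,X_{i_q}\text{ are all vertices of }\hat K_{n+q}]$, the sum ranging over ordered $q$-tuples of distinct indices (for an atomless $\mu$ this is almost surely an equality, but only the inequality is needed and it holds for every $\mu$). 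Taking expectations and using that $X_1,\dots,X_{n+q}$ are exchangeable, all $\prod_{j=0}^{q-1}(n+q-j)$ terms share the same probability, so after dividing I obtain that the left-hand side is at most $\PP[X_{n+1},\dots,X_{n+q}\text{ are all vertices of }\hat K_{n+q}]$.

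The geometric heart of the argument, and the step I expect to be the only real obstacle (and the source of the inequality rather than an identity), is the following inclusion. For each $k\in\{1,\dots,q\}$, if $X_{n+k}$ is a vertex of $\hat K_{n+q}$ then, being an extreme point of a finite point set, it lies outside the convex hull of the remaining $n+q-1$ points; as $X_1,\dots,X_n$ are among those points, $\hat K_n$ is contained in that convex hull and therefore $X_{n+k}\notin\hat K_n$. Hence the event that $X_{n+1},\dots,X_{n+q}$ are all vertices of $\hat K_{n+q}$ is contained in the event $\{X_{n+k}\notin\hat K_n\text{ for all }k=1,\dots,q\}$. To finish, I would condition on $X_1,\dots,X_n$: these determine $\hat K_n$, and the remaining $X_{n+1},\dots,X_{n+q}$ are i.i.d. with law $\mu$ and independent of $\hat K_n$, so the conditional probability of the latter event is exactly $(1-\mu(\hat K_n))^q$. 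Taking expectation gives $\E[(1-\mu(\hat K_n))^q]$, and chaining the three displayed bounds yields the claim.
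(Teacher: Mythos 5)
Your proof is correct and follows essentially the same route as the paper's: both rest on the identity $\E[(1-\mu(\hat K_n))^q]=\PP[X_{n+j}\notin\hat K_n,\ \forall j]$, the inclusion of the event that $X_{n+1},\ldots,X_{n+q}$ are vertices of $\hat K_{n+q}$ in that event, and exchangeability to convert the vertex count into the falling factorial of $R_{n+q}$. The only cosmetic differences are that you argue from the left-hand side forward with ordered tuples rather than unordered ones, and you are slightly more careful about possible ties among the points, which the paper glosses over.
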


If $\mu$ has a bounded density $f$ with respect to the Lebesgue measure and is supported on a convex body $K$, we can combining Corollary \ref{Corollary1} and Theorem \ref{EfronIdExt} yields the following inequality:

\begin{equation*}
	\E_f\left[R_{n}(R_{n}-1)\ldots(R_{n}-q+1)\right] \leq A_q(M+1)^q n^{\frac{q(d-1)}{d+1}}, \forall n\in\N^*, \forall q\in\N^*,
\end{equation*}
where $A_q$ is the same constant as in Corollary \ref{Corollary1} and $M$ is an almost everywhere upper bound of $f$. Since the polynomial $x^q$ is a linear combination of the polynomials $x(x-1)\ldots(x-k+1), 0\leq k\leq q$, we get the following result.

\begin{theorem} \label{ThmBoundRn}
Let $K$ be a convex body and $f$ satisfy the assumptions of Theorem \ref{MainTheorem}. Then, for all positive integer $q$, there exists a positive constant $B_q$ that depends on $d$ and $q$ only such that
\begin{equation}\label{UBeq}
	\E_f\left[R_n^q\right] \leq B_q(M+1)^q n^{\frac{q(d-1)}{d+1}}.
\end{equation}
\end{theorem}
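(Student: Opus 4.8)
The plan is to follow the route indicated just above the statement: express the monomial $R_n^q$ as a nonnegative combination of falling factorials, bound each falling factorial through the extended Efron identity of Lemma \ref{EfronIdExt}, and feed in the moment bound for the weighted missing volume from Corollary \ref{MainCor}. First I would record the algebraic fact that for every nonnegative integer $m$,
\[
	m^q=\sum_{k=0}^q S(q,k)\,m(m-1)\cdots(m-k+1),
\]
where the $S(q,k)$ are the Stirling numbers of the second kind, nonnegative integers depending on $q$ only. Applying this to the nonnegative integer $R_n$ and taking expectations, and noting that each product $\prod_{j=0}^{k-1}(R_n-j)$ is nonnegative (it vanishes when $R_n<k$ and is positive otherwise), reduces the problem to bounding $\E_f\big[\prod_{j=0}^{k-1}(R_n-j)\big]$ for each $k\le q$ and summing.

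To control a single falling factorial I would apply Lemma \ref{EfronIdExt} with $k$ in place of $q$ and $n-k$ in place of $n$, so that the polytope appearing on the left-hand side is built from $n$ points. Since $1-\mu(\hat K_{n-k})=d_f(K,\hat K_{n-k})$, this gives
\[
	\E\!\left[\prod_{j=0}^{k-1}\frac{R_n-j}{n-j}\right]\le \E_f\!\left[d_f(K,\hat K_{n-k})^k\right].
\]
Pulling out the deterministic denominator $\prod_{j=0}^{k-1}(n-j)\le n^k$ and invoking Corollary \ref{MainCor} to bound the right-hand side by $A_k(M+1)^k(n-k)^{-2k/(d+1)}$ yields
\[
	\E_f\!\left[\prod_{j=0}^{k-1}(R_n-j)\right]\le A_k(M+1)^k\,n^k\,(n-k)^{-2k/(d+1)}.
\]
For $n\ge 2q$ one has $n-k\ge n/2$, so $(n-k)^{-2k/(d+1)}\le 2^{2q/(d+1)}n^{-2k/(d+1)}$, a constant factor; together with the exponent identity $k-2k/(d+1)=k(d-1)/(d+1)$ this produces a bound of the shape $c_k(M+1)^k n^{k(d-1)/(d+1)}$.

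It then remains to assemble the terms. Because $M+1\ge1$ and $k\le q$ we have $(M+1)^k\le(M+1)^q$, and because $n\ge1$ and the exponent $k(d-1)/(d+1)$ increases with $k$ we have $n^{k(d-1)/(d+1)}\le n^{q(d-1)/(d+1)}$; summing the $q+1$ contributions weighted by the $S(q,k)$ collapses everything into $B_q(M+1)^q n^{q(d-1)/(d+1)}$ for a constant $B_q$ depending only on $d$ and $q$, valid for $n\ge 2q$. The finitely many small indices $n<2q$ are disposed of by the trivial deterministic bound $R_n\le n$, which is absorbed into $B_q$.

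I expect the only real work to be bookkeeping rather than anything conceptual: one must carry out the index shift in Lemma \ref{EfronIdExt} cleanly, check that both the denominator product and the shifted argument $n-k$ inside Corollary \ref{MainCor} cost only constant factors, and verify that the lower-order Stirling terms ($k<q$) are genuinely dominated by the top term so that they vanish into the constant without degrading the advertised rate. The positivity of the Stirling numbers $S(q,k)$ and of the falling factorials is exactly what turns this into a clean summation, avoiding any need for sign control.
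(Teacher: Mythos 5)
Your proposal is correct and follows essentially the same route as the paper: expand $R_n^q$ in falling factorials with nonnegative (Stirling) coefficients, bound each falling factorial via Lemma \ref{EfronIdExt} combined with the moment bound of Corollary \ref{MainCor}, and absorb the index shift and the small-$n$ cases into the constant. Your version merely makes explicit the bookkeeping (the shift $n\mapsto n-k$, the factor from $(n-k)^{-2k/(d+1)}$ versus $n^{-2k/(d+1)}$, and the domination of lower-order terms) that the paper leaves implicit.
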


\begin{remark}
	The boundedness assumption on $f$ in Theorem \ref{ThmBoundRn} seems unavoidable, in the following sense: Let $\mu$ be a probability measure that puts positive mass on arbitrarily many points in $K$, near its boundary and let $f$ be the density of a regularized version of $\mu$, truncated so it remains supported on $K$. Then, with high probability, $R_n$ can be arbitrarily large. 
\end{remark}

\section{The case of uniform distributions} \label{Sec:Unif}

As discussed in the introduction, the case of the uniform distribution on a convex body $K$ is an extremely important case in the stochastic geometry literature. In this section, we use the results proven in the previous sections in order to derive universal inequalities for the convex hull of uniform random points. By universal, we mean uniform for all convex bodies, irrespective of their volume, or facial structure.

\begin{theorem}\label{DevIneq}
	There exist two positive constants $\aleph_1, \aleph_2$ and $\aleph_3$, which depend on $d$ only, such that:
\begin{equation}
    \label{Theorem1} \sup_{K\in\K}\PP_K\left[n\left(\frac{|K\backslash\hat K_n|}{|K|}-\aleph_1 n^{-2/(d+1)}\right)>x\right] \leq \aleph_2 e^{-\aleph_3 x}, \forall x>0.
\end{equation}
\end{theorem}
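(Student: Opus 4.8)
The plan is to deduce this from Theorem \ref{MainTheorem} by exploiting the affine invariance of the relative missing volume. The obstacle is that Theorem \ref{MainTheorem} both requires $K\in\K^1$ and produces constants depending on the sup-norm bound $M$ of the density; for the uniform density on $K$ one has $M=1/|K|$, which is uncontrolled (it blows up when $|K|\to 0$), and moreover an arbitrary $K\in\K$ need not sit inside $B_d$. I would first observe that for the \emph{uniform} density $f=|K|^{-1}\mathds{1}_K$ the weighted missing volume is exactly the relative missing volume, $d_f(K,\hat K_n)=|K\setminus\hat K_n|/|K|$, and that this quantity is invariant under invertible affine maps: if $T$ is affine with $|\det T|=J$, then $T(\hat K_n)$ is the convex hull of $T(X_1),\dots,T(X_n)$, the two Jacobian factors $J$ in $|T(K)\setminus T(\hat K_n)|$ and $|T(K)|$ cancel, and uniform points on $K$ push forward to uniform points on $T(K)$. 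Hence the law of $|K\setminus\hat K_n|/|K|$ under $\PP_K$ equals the law of $|T(K)\setminus\hat K_n|/|T(K)|$ under $\PP_{T(K)}$, so it suffices to bound it for one convenient affine representative of $K$.

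The key step is then to choose that representative via John's theorem, so as to simultaneously place the body in $\K^1$ and bound its volume from below by a dimensional constant. For any $K\in\K$, John's theorem provides an affine map $T$ with $B_d\subseteq T(K)\subseteq d\,B_d$. Setting $K'=\tfrac{1}{d}T(K)$ gives $\tfrac{1}{d}B_d\subseteq K'\subseteq B_d$, so that $K'\in\K^1$ and
\begin{equation*}
	|K'|\geq \big|\tfrac{1}{d}B_d\big|=\beta_d/d^{d}.
\end{equation*}
Consequently the uniform density on $K'$ is bounded by $M:=1/|K'|\leq d^{d}/\beta_d=:M_0$, a constant depending on $d$ only.

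It remains to apply Theorem \ref{MainTheorem} to $K'\in\K^1$ with the uniform density $f'$, whose essential supremum is at most $M_0$. This yields, for all $x\geq 0$,
\begin{equation*}
	\PP_{K'}\!\left[n\Big(d_{f'}(K',\hat K_n)-C_1(M_0+1)\,n^{-2/(d+1)}\Big)>x\right]\leq C_2 e^{-x},
\end{equation*}
with $C_1,C_2$ depending on $d$ only. Since $d_{f'}(K',\hat K_n)=|K'\setminus\hat K_n|/|K'|$ has, under $\PP_{K'}$, the same distribution as $|K\setminus\hat K_n|/|K|$ under $\PP_K$, and since $M_0$ is independent of $K$, taking the supremum over all $K\in\K$ gives the claim with $\aleph_1=C_1(M_0+1)$, $\aleph_2=C_2$ and $\aleph_3=1$.

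The genuinely substantive point is the affine reduction itself: it is what decouples the bound from both the location and the volume of $K$ and turns the uncontrolled quantity $1/|K|$ into the dimensional constant $M_0$. Everything else is a direct invocation of Theorem \ref{MainTheorem} and of the standard John position, so no further estimation is needed; one only has to verify carefully that the push-forward of the uniform law and of the convex hull under $T$ behaves as claimed, which is immediate from the affine equivariance of convex hulls and the change-of-variables formula.
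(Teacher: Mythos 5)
Your proposal is correct and follows essentially the same route as the paper: reduce by affine invariance of the relative missing volume, use John's theorem to place the body inside $B_d$ with volume bounded below by $\beta_d/d^d$, so the uniform density is bounded by the dimensional constant $d^d/\beta_d$, and then invoke Theorem \ref{MainTheorem}. The only cosmetic difference is that you use the ball form of John's position while the paper phrases it via the John ellipsoid and maps the outer ellipsoid to $B_d$; the resulting constants are the same.
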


\begin{proof}
In order to prove Theorem \ref{DevIneq}, we first state two lemmas, the first of which is about the so called \textit{John's ellipsoid} of a convex body.

\begin{lemma}\label{ellipsoid}
	For all $K\in\K$, there exists $a\in\R^d$ and an ellipsoid $E$ such that 
	\begin{equation} \label{Jellips} 
		a+d^{-1}(E-c)\subseteq K\subseteq E.
	\end{equation}
\end{lemma}
        Proof of Lemma \ref{ellipsoid} can be found in \cite{Leichtweiss59} and \cite{HugSchneider07}. If $E$ is an ellipsoid of maximum volume satisfying \eqref{Jellips} for some $a\in\R^d$, then $a+d^{-1}(E-c)$ is called \textit{John ellipsoid} of $K$.

Let $K\in\K$ and $X_1,\ldots,X_n$ be i.i.d. uniform random points in $K$. Let $E$ be an ellipsoid that satisfies \eqref{Jellips}, and $T$ an affine transform in $\R^d$ which maps $E$ to the unit ball $B_d$. Note that $TX_1,\ldots,TX_n$ are independent and uniformly distributed in $TK$, and their convex hull is $T\hat K_n$. Hence, the distribution of $\frac{|K\setminus\hat K_n|}{|K|}=\frac{|TK\setminus T\hat K_n|}{|TK|}$ is the same as that of $\frac{|TK\setminus\hat K'_n|}{|TK|}$, where $\hat K'_n$ is the convex hull of $X'_1,\ldots,X'_n$, which are i.i.d. uniform random points in $TK$. Therefore, for all $\varepsilon>0$,
\begin{equation}
	\PP_K\left[\frac{|K\setminus\hat K_n|}{|K|}>\varepsilon\right]=\PP_f[d_f(TK,\hat K_n)>\varepsilon],
\end{equation}
where $f$ is the uniform density on $TK$. The density $f$ is bounded from above by $M=1/|TK|\leq d^d/|TE|=d^d/\beta_d$, by definition of $E$ and $T$. Hence, applying Theorem \ref{MainTheorem} yields the desired result, since all the constants in that theorem depends on $d$ only.

\end{proof}

\begin{remark} \label{RemarkNearlyUnif}
	Similar arguments could be used to prove a deviation inequality with the same rate rate $n^{-2/(d+1)}$, for a density $f$ that is nearly uniform on $K$, i.e., that satisfies $0<m\leq f(x)\leq M$ for all $x\in K$, where $m$ and $M$ are positive numbers. Indeed, for all invertible affine transformation $T$, $d_g(TK,\hat K_n')$ and $d_f(K,\hat K_n)$ have the same distribution, where $g(y)=|\det T|^{-1}f(T^{-1}y), y\in TK$ and $\hat K_n'$ is the convex hull of $n$ i.i.d. points with density $g$. In addition, $\DS d_f(K,\hat K_n)\geq \frac{m}{M}\frac{|K\setminus\hat K_n|}{|K|}$. Therefore, the same reasoning as in the proof of Theorem \ref{DevIneq} yields
\begin{equation*}
	\PP_f\left[n\left(\frac{|K\backslash\hat K_n|}{|K|}-\aleph_1' n^{-2/(d+1)}\right)>x\right] \leq \aleph_2' e^{-\aleph_3' x}, \forall x>0,
\end{equation*}
where $\aleph_1',\aleph_2'$ and $\aleph_3'$ are positive constant that depend only on $d$ and on the ratio $M/m$.

\end{remark}

A drawback of Theorem \ref{DevIneq} is that it involves constants which depend at least exponentially on the dimension $d$. However, this seems to be the price for getting a uniform deviation inequality on $\K$. 

The following moment inequalities are a consequence of Theorem \ref{DevIneq}.

\begin{corollary}\label{Corollary1}
	For every positive number $q$, there exists a positive constant $A_q$, which depends on $d$ and $q$ only, such that
\begin{equation}
	\label{Cor1} \sup_{K\in\K}\E_K\left[\left(\frac{|K\backslash\hat K_n|}{|K|}\right)^q\right]\leq A_q n^{-2q/(d+1)}.
\end{equation}
\end{corollary}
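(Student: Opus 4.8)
The plan is to mimic the proof of Corollary \ref{MainCor}, replacing the deviation inequality of Theorem \ref{MainTheorem} by the uniform one of Theorem \ref{DevIneq}. Write $W=|K\setminus\hat K_n|/|K|$ for the normalized missing volume; since $\hat K_n\subseteq K$ almost surely, $W$ takes values in $[0,1]$. Setting $\delta=\aleph_1 n^{-2/(d+1)}$, Theorem \ref{DevIneq} reads, after the substitution $x=n(t-\delta)$,
\begin{equation*}
	\PP_K[W>t]\leq \aleph_2 e^{-\aleph_3 n(t-\delta)}, \quad \forall t>\delta,
\end{equation*}
and this bound holds uniformly over $K\in\K$, which is the feature that will make the final constant independent of $K$.

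Then I would apply Fubini's identity $\E_K[W^q]=q\int_0^\infty t^{q-1}\PP_K[W>t]\diff t$ and split the integral at $\delta$. On $(0,\delta)$ I bound $\PP_K[W>t]\leq 1$, giving a contribution of at most $\delta^q$. On $(\delta,\infty)$ I change variables to $u=t-\delta$ and insert the tail bound, obtaining
\begin{equation*}
	q\aleph_2\int_0^\infty (u+\delta)^{q-1}e^{-\aleph_3 n u}\diff u,
\end{equation*}
and a further substitution $x=\aleph_3 n u$ turns this into a Gamma-type integral weighted by $(x/(\aleph_3 n)+\delta)^{q-1}$. As in Corollary \ref{MainCor}, I separate the two terms in $(a+b)^{q-1}$ using an inequality of the form $(a+b)^{q-1}\leq c_q(a^{q-1}+b^{q-1})$ valid for $q\geq 1$, and for $0<q<1$ the simpler monotonicity bound $(u+\delta)^{q-1}\leq \delta^{q-1}$. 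Either way the resulting terms are bounded by a $(d,q)$-dependent constant times $\delta^q$, using that $1/(n\delta)=\aleph_1^{-1}n^{-(d-1)/(d+1)}\leq \aleph_1^{-1}$ for $n\geq 1$ and $d\geq 2$, so that the factor $\delta^{q-1}/n$ arising in the $0<q<1$ case is itself at most $\aleph_1^{-1}\delta^q$.

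Collecting the two pieces gives $\E_K[W^q]\leq a_q\delta^q=a_q\aleph_1^q n^{-2q/(d+1)}$ for a constant $a_q$ depending only on $d$ and $q$; absorbing $\aleph_1^q$ into $A_q:=a_q\aleph_1^q$ yields the claim, and since none of these constants depends on $K$, the supremum over $\K$ is controlled. There is essentially no obstacle here beyond bookkeeping: the content is entirely in Theorem \ref{DevIneq}, whose uniformity over $\K$ is what promotes a pointwise moment bound to the uniform one. The only mild care needed is in treating the exponent $q-1$ for $q<1$ separately, where the singular factor $\delta^{q-1}$ is tamed by the extra factor $1/n$ coming from the exponential integral.
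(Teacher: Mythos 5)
Your proof is correct and follows exactly the route the paper intends: the paper states Corollary \ref{Corollary1} as a direct consequence of Theorem \ref{DevIneq}, omitting the details because the argument is the same Fubini-plus-tail-bound computation as in Corollary \ref{MainCor}, which is precisely what you carry out (with the added care for $0<q<1$, handled correctly via $1/(n\delta)\leq \aleph_1^{-1}$). No issues.
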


Note that this corollary could also be derived from Vu's result \cite{Vu05}, combined with Giannopoulos and Tsolomitis' result \cite{GiannopoulosTsolomitis2003} (see Remark \ref{ImportantRemark} below).

Combining Corollary \ref{Corollary1} with the lower bound for the minimax risk obtained in \cite{Mammen95} yields the following result.

\begin{corollary}\label{Corollary2}
	For every positive number $q$, there exist positive constants $a_q$ and $A_q$, which depend on $d$ and $q$ only, such that
	$$a_qn^{-2q/(d+1)} \leq \sup_{K\in\K}\E_K\left[\left(\frac{|K\backslash\hat K_n|}{|K|}\right)^q\right]\leq A_q n^{-2q/(d+1)}.$$
\end{corollary}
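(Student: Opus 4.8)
The upper bound is exactly Corollary~\ref{Corollary1}, so the entire content lies in the lower bound; and since the quantity of interest is a supremum over $\K$, it suffices to exhibit a single convex body (or family) for which the $q$-th moment is at least of order $n^{-2q/(d+1)}$. Write $Z=|K\setminus\hat K_n|/|K|$ for brevity. The plan is to compare the worst-case risk of the convex hull against the minimax risk to settle the first moment, and then to upgrade this first-moment lower bound to all moments $q>0$.

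First I would observe that the convex hull is a legitimate, data-dependent estimator of $K$, and that $\hat K_n\subseteq K$ almost surely, so that $|K\triangle\hat K_n|=|K\setminus\hat K_n|$ and $Z$ is exactly the normalized symmetric-difference loss of this estimator. Consequently,
$$\sup_{K\in\K}\E_K\left[Z\right]\geq \inf_{\tilde K}\sup_{K\in\K}\E_K\left[\frac{|K\triangle\tilde K|}{|K|}\right],$$
where the infimum runs over all estimators $\tilde K$ based on $X_1,\dots,X_n$. The right-hand side is the minimax risk, which by the lower bound of \cite{Mammen95} is at least $a_1 n^{-2/(d+1)}$ for a constant $a_1>0$ depending on $d$ only. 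This settles the case $q=1$.

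For $q\geq 1$, Jensen's inequality applied to the convex map $t\mapsto t^q$ gives $\E_K[Z^q]\geq(\E_K[Z])^q$ for every $K$, and taking the supremum over $\K$ yields the claim with $a_q=a_1^q$. The delicate range is $0<q<1$, where $t\mapsto t^q$ is concave and Jensen points the wrong way. Here I would instead fix a smooth body, say $K=B_d$, for which the classical two-sided asymptotics (Rényi--Sulanke and its higher-dimensional extensions, also underlying \cite{Mammen95}) give a matching first-moment \emph{lower} bound $\E_{B_d}[Z]\geq c_1 n^{-2/(d+1)}$, while Corollary~\ref{Corollary1} with exponent $2$ gives the second-moment \emph{upper} bound $\E_{B_d}[Z^2]\leq A_2 n^{-4/(d+1)}$. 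The Paley--Zygmund inequality then bounds the lower tail from below,
$$\PP_{B_d}\left[Z\geq\tfrac12\E_{B_d}[Z]\right]\geq\frac14\,\frac{(\E_{B_d}[Z])^2}{\E_{B_d}[Z^2]}\geq\frac{c_1^2}{4A_2}=:p_0>0,$$
with $p_0$ independent of $n$. Restricting the expectation to this event,
$$\E_{B_d}[Z^q]\geq\left(\tfrac12\E_{B_d}[Z]\right)^q\PP_{B_d}\left[Z\geq\tfrac12\E_{B_d}[Z]\right]\geq p_0\left(\tfrac{c_1}{2}\right)^q n^{-2q/(d+1)},$$
which gives the lower bound for every $q\in(0,1)$, hence, together with the case $q\geq 1$, for all $q>0$.

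The main obstacle is the first-moment lower bound $\E_{B_d}[Z]\geq c_1 n^{-2/(d+1)}$ with a constant uniform in $n$: the minimax statement of \cite{Mammen95} only guarantees that \emph{some} body attains the rate, whereas the Paley--Zygmund argument needs the first-moment lower bound and the Corollary~\ref{Corollary1} upper bound to hold at the \emph{same} body. Using $B_d$ sidesteps this by appealing to the sharp two-sided asymptotics for the expected missing volume of a ball, which are available in the stochastic-geometry literature; the only care required is to check that the constants there do not degrade with $n$, so that $p_0$ is a genuine positive constant depending on $d$ only.
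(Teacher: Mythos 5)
Your argument is correct, and its skeleton coincides with the paper's (one-sentence) proof: the upper bound is Corollary~\ref{Corollary1}, and the lower bound comes from viewing $\hat K_n$ as one particular estimator, so that its worst-case risk dominates the minimax risk bounded below in \cite{Mammen95}; Jensen then handles $q\geq 1$. Where you genuinely diverge is the range $0<q<1$, which the paper glosses over entirely. Your Paley--Zygmund route is valid: Corollary~\ref{Corollary1} with exponent $2$ supplies the second-moment upper bound, the classical two-sided asymptotics for the ball supply the first-moment lower bound at the \emph{same} body, and the resulting anti-concentration constant $p_0$ is independent of $n$ (with the standard finite-$n$ adjustment of the constant for small $n$, since $\E_{B_d}[Z]>0$ for every $n$). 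The trade-off is worth noting: minimax lower bounds of the Assouad/Fano type used in \cite{Mammen95} are typically established in the ``in probability'' form $\inf_{\tilde K}\sup_K\PP_K[\,\mathrm{loss}>c\,n^{-2/(d+1)}]\geq p>0$, which yields the lower bound for \emph{all} moments $q>0$ in one line and is presumably what the paper has in mind; your approach avoids having to unpack the proof of \cite{Mammen95}, at the price of importing the sharp expected-missing-volume asymptotics for $B_d$ from the stochastic-geometry literature. Both are legitimate; yours is the more self-contained reduction given only the first-moment form of the minimax bound.
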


\begin{remark} \label{ImportantRemark}
	An analogous result to Theorem \ref{DevIneq} could be derived from Vu's result \eqref{DevVu} combined with an elegant result proven by Giannopoulos and Tsolomitis \cite{GiannopoulosTsolomitis2003}, Theorem 3.6. Let $K\in\K^d$ have volume one and $\phi$ be a non decreasing function defined on the positive real line. Then, the expectation $\E_K[\phi(|\hat K_n|)]$ is minimized when $K$ is an ellipsoid. The key argument is that this expectation does not increase when $K$ is replaced by its Steiner symmetral with respect to a hyperplane, and performing such a transform iteratively on $K$ leads to a Euclidean ball at the limit. When $\phi$ is the indicator function of the interval $(x,\infty)$, for $x>0$, Giannopoulos and Tsolomitis' results implies that $\displaystyle{\PP\left[V_n\geq x\right]}$ is maximized when $K$ is an ellipsoid. Hence, applying \eqref{DevVu} to an ellipsoid of volume one yields to a uniform deviation inequality as in Theorem \ref{DevIneq}. However, the range for $x$ would be much smaller than ours, which allows to capture the whole right tail of $V_n$. Yet, it would still yield similar bounds for the moments of $V_n$, as in Corollary \ref{Corollary1}.

\end{remark}

Let $K\in\K$ and $n, q$ be positive integers. Hölder inequality yields $\E_K\left[R_n^q\right]\geq \E_K\left[R_n\right]^q$ and, by Efron's identity \eqref{EfronId}, 
\begin{equation*}
	\E_K\left[R_n^q\right]\geq n^q\E_K\left[\frac{|K\backslash\hat K_{n-1}|}{|K|}\right]^q. 
\end{equation*}
Hence, one gets the following theorem:

\begin{theorem} \label{UB}
		Let $n$ and $q$ be positive integers. Then, for some positive constants $b_q$ and $B_q$ which depend on $d$ and $q$ only,
		\begin{equation*}
			b_q n^{\frac{q(d-1)}{d+1}} \leq \sup_{K\in\mathcal K_d}\E_K\left[R_n^q\right] \leq B_q n^{\frac{q(d-1)}{d+1}}.
		\end{equation*}
\end{theorem}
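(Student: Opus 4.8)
The plan is to prove the two bounds separately, each building on results already in hand, and both exploiting that $R_n$ is invariant under invertible affine transformations: the vertices of $T\hat K_n$ are exactly the images under $T$ of the vertices of $\hat K_n$, so $R_n$ is unchanged, while the uniform law on $K$ is pushed forward to the uniform law on $TK$. Consequently $\E_K[R_n^q]=\E_{TK}[R_n^q]$ for every invertible affine $T$.

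For the upper bound I would reduce to a normalized body and invoke Theorem \ref{ThmBoundRn}. Given $K\in\mathcal K_d$, apply the affine map $T$ from the John ellipsoid construction (Lemma \ref{ellipsoid}) so that $TK\in\K^1$. Since $TK$ then contains a ball of radius $1/d$, one has $|TK|\geq \beta_d/d^d$, so the uniform density on $TK$ is bounded by $M\leq d^d/\beta_d$, a quantity depending on $d$ only. By affine invariance, $\E_K[R_n^q]=\E_{TK}[R_n^q]$, and Theorem \ref{ThmBoundRn} yields $\E_{TK}[R_n^q]\leq B_q(M+1)^q n^{q(d-1)/(d+1)}$. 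Absorbing the factor $(M+1)^q$ into the constant gives the upper bound uniformly over $\mathcal K_d$, with a constant depending only on $d$ and $q$; this is precisely the normalization already used in the proof of Theorem \ref{DevIneq}.

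For the lower bound I would continue from the inequality established immediately before the statement, namely $\E_K[R_n^q]\geq n^q\E_K[|K\setminus\hat K_{n-1}|/|K|]^q$, which follows from Hölder's inequality and Efron's identity \eqref{EfronId}. Taking the supremum over $K\in\mathcal K_d$ and using that $t\mapsto t^q$ is nondecreasing on $[0,\infty)$ to move the power past the supremum, I obtain
$$\sup_{K\in\mathcal K_d}\E_K\left[R_n^q\right]\geq n^q\left(\sup_{K\in\mathcal K_d}\E_K\left[\frac{|K\setminus\hat K_{n-1}|}{|K|}\right]\right)^q.$$
Applying the lower bound of Corollary \ref{Corollary2} at exponent $1$ and sample size $n-1$ gives $\sup_{K}\E_K[|K\setminus\hat K_{n-1}|/|K|]\geq a_1(n-1)^{-2/(d+1)}$. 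Since $(n-1)^{-2/(d+1)}\geq n^{-2/(d+1)}$ for $n\geq 2$ and $q-2q/(d+1)=q(d-1)/(d+1)$, the product $n^q(n-1)^{-2q/(d+1)}$ is at least $n^{q(d-1)/(d+1)}$, which yields the claim with $b_q=a_1^q$ for $n\geq 2$; the case $n=1$ is trivial since $R_1=1$ and is absorbed into the constants.

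Most of the work here is routine, so the genuine care point is the upper-bound normalization: Theorem \ref{ThmBoundRn} depends on the density bound $M$, which is unbounded over $\mathcal K_d$ without rescaling, so the argument hinges on combining the affine invariance of $R_n$ with the John-ellipsoid volume estimate to control $M$ by a dimensional constant. On the lower-bound side the only subtleties are the legitimacy of commuting the $q$-th power with the supremum (valid by monotonicity) and the harmless index shift $n\mapsto n-1$ together with the exponent bookkeeping; neither presents a real obstacle.
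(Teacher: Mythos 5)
Your proposal is correct and follows essentially the same route as the paper: the lower bound via Hölder, Efron's identity and the minimax lower bound of Corollary \ref{Corollary2} at $q=1$, and the upper bound via Theorem \ref{ThmBoundRn} after the John-ellipsoid normalization (the paper builds that normalization into Corollary \ref{Corollary1} and feeds it through Lemma \ref{EfronIdExt}, whereas you invoke the affine invariance of $R_n$ directly, but the idea is identical). Your explicit handling of the index shift $n\mapsto n-1$ and the $n=1$ case is careful bookkeeping the paper leaves implicit.
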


Combined with Theorem \ref{ThmBoundRn}, this theorem has two consequences. First, the rate in the upper bound of Theorem \ref{ThmBoundRn} is tight, uniformly on all arbitrary convex bodies and bounded densities. Second, the uniform case is the worst case, i.e., yields the largest possible rate for the expected number of vertices of $\hat K_n$. Namely, the following holds. For $K\in\K^1$ and $M>0$, denote by $\mathcal F(K,M)$ the collection of all densities that are supported on $K$ and bounded by $M$. For two positive sequences $u_n$ and $v_n$, write $u_n=O(v_n)$ if the ratio $u_n/v_n$ is bounded, independently of $n$.

\begin{theorem} \label{ThmOpt}
	For all $M>0$ and all positive integer $q$,
\begin{equation*}
	\sup_{K\in\K^1}\sup_{f\in\mathcal F(K,M)} \E_f\left[R_n^q\right] = O\left(\sup_{K\in\K^1}\E_K\left[R_n^q\right]\right).
\end{equation*}
\end{theorem}

\begin{remark}
We do not know whether the supremum over $K$ could be removed in Theorem \ref{ThmOpt}: We propose the following open question. Is it true that for all $M>0$ and $K\in\K^1$,
\begin{equation*}
	\sup_{f\in\mathcal F(K,M)} \E_f\left[R_n\right] = O\left(\E_K\left[R_n\right]\right) \quad ?
\end{equation*}

\end{remark}

\section{Appendix: Proof of the lemmas}

\paragraph{Proof of Lemma \ref{Lemma2}:}

 Let $G\in\K$. Steiner formula (see Section 4.1 in \cite{SchneiderBook}) states that there exist positive numbers $L_1(G),\ldots,L_d(G)$, such that
            \begin{equation}
                \label{SteinerFormula}|G^\lambda\backslash G|=\sum_{j=1}^d L_j(G)\lambda^j, \lambda\geq0.
            \end{equation}
            Besides the $L_j(G), j=1,\ldots,d$ are increasing functions of $G$. In particular, if $G\in\K^1$, then $L_j(G)\leq L_j(B_d)$. \\
            Let $G,G'\in\K^1$, and let $\lambda=d_H(G,G')$. Since $G$ and $G'$ are included in the unit ball, $\lambda$ is not greater than its diameter, so $\lambda\leq 2$. By definition of the Hausdorff distance, $G\subseteq G'^{\lambda}$ and $G'\subseteq G^{\lambda}$. Hence,
            \begin{align*}
                |G\triangle G'| & = |G\backslash G'|+|G'\backslash G| \leq |G'^{\lambda}\backslash G'|+|G^{\lambda}\backslash G| \\
                & \leq 2\sum_{j=1}^d L_j(B_d)\lambda^j \leq \lambda\sum_{j=1}^d L_j(B_d)2^j.
            \end{align*}
            The Lemma is proved by setting $\alpha_1=\sum_{j=1}^d L_j(B_d)2^j$.

        Note that since $\delta\leq 1$, Steiner formula \eqref{SteinerFormula} implies, for $G\in\K^1$, that
        \begin{equation}
            \label{Steiner}|G^\delta\backslash G|\leq \alpha_2\delta,
        \end{equation}
        where $\alpha_2=\sum_{j=1}^d L_j(B_d)$. \hfill \textsquare

\paragraph{Proof of Lemma \ref{lemmaProj}:}

For $x\in K$, 
$$f(x)=\frac{\textsf{Vol}_{D-d}\left((x+H)\cap K_0\right)}{\textsf{Vol}_D(K_0)},$$
where, for all integers $p$, $\textsf{Vol}_p$ stands for the $p$-dimensional volume and we recall that $H$ is the orthogonal space of $\R^d$ in $\R^D$. Let $x\in K$ with $t=\rho(x,\partial K)\leq r$. Let $x'\in\partial K$ such that $\rho(x,\partial K)=\rho(x,x')$. Let $x_0\in\partial K_0$ whose orthogonal projection onto $\R^d$ is $x'$. By the $r$-rolling condition, there exists $a\in K_0$ with $x'\in B_D(a,r)\subseteq K_0$. Note that $x'-a\in\R^d$ ($\R^d$ being identified to a subspace of $R^D$, orthogonal to $H$) since the (unique) tangent space to $K_0$ at $x'$ needs to be tangent to $B_D(a,r)$ as well. Therefore, 
\begin{equation*}
	\textsf{Vol}_{D-d}\left((x+H)\cap K_0\right) \geq \textsf{Vol}_{D-d}\left((x+H)\cap B_D(a,r)\right)
\end{equation*}	
and $(x+H)\cap B_D(a,r)$ is a $(D-d)$-dimensional ball with radius $h$, where $h=\sqrt{2rt-t^2}\geq \sqrt{rt}$. Hence, for all $x\in K$ with $\rho(x,\partial K)\leq r$,
\begin{equation*}
f(x)\geq \frac{(rt)^{(D-d)/2}\beta_{D-d}}{\textsf{Vol}_D(K_0)} \geq (rt)^{(D-d)/2}\beta_{D-d}\beta_D,
\end{equation*}
which proves the lemma. \hfill \textsquare

\paragraph{Proof of Lemma \ref{LemmaMargin}:}

For all $t\in (0,t_0]$,
\begin{align}
	|K\setminus\hat K_n| & = \int_K \mathds 1_{x\notin \hat K_n}\diff x \nonumber \\
	& = \int_{K_f(t)} \mathds 1_{x\notin \hat K_n}\diff x + \int_{K\setminus K_f(t)} \mathds 1_{x\notin \hat K_n}\diff x \nonumber \\
	& \leq |K_f(t)|+\frac{1}{t}\int_{K\setminus K_f(t)} \mathds f(x) 1_{x\notin \hat K_n}\diff x \nonumber \\
	& \leq Lt^\alpha+\frac{1}{t}d_f(K,\hat K_n). \label{LemmaMargin1}
\end{align}
If $d_f(K,\hat K_n)\leq t_0^{\alpha+1}$, take $t=d_f(K,\hat K_n)^{\alpha+1}$ in \eqref{LemmaMargin1}. \hfill \textsquare

\paragraph{Proof of Lemma \ref{EfronIdExt}:}

For precision's sake, we denote by $\PP^{\otimes n}$ the $n$-product of the probability measure $\mu$, i.e., the joint probability measure of the random variables $X_1,\ldots,X_n$, and by $\E^{\otimes n}$ the corresponding expectation operator. First, note that the expectation $\E^{\otimes n}[(1-\mu(\hat K_n))^q]$ can be rewritten as :

\begin{align}
	\E^{\otimes n}\left[(1-\mu(\hat K_n))^q\right] & = \E^{\otimes n}\left[\PP^{\otimes q}\left[X_{n+1}\notin \hat K_n, \ldots, X_{n+q}\notin \hat K_n|X_1,\ldots,X_n\right]\right] \nonumber \\
	& = \PP^{\otimes (n+q)}\left[X_{n+j}\notin\hat K_n, \forall j=1,\ldots,q\right]. \label{step01}
\end{align}
Using the symmetric role of $X_1,\ldots,X_{n+q}$, and since the event $\{X_{n+j}\notin\hat K_n, \forall j=1,\ldots,q\}$ contains the event $\{X_{n+j}\in\mathcal V_{n+q}, \forall j=1,\ldots,q\}$, \eqref{step01} yields
\begin{align}
	\E^{\otimes n} & \left[(1-\mu(\hat K_n))^q\right] \nonumber \\
	& \geq \PP^{\otimes n+q}\left[X_{n+j}\in\mathcal V_{n+q}, \forall j=1,\ldots,q\right] \nonumber \\[2mm]
	& = \frac{1}{{{n+q}\choose{q}}}\sum_{1\leq i_1<\ldots<i_q\leq n+q}\PP^{\otimes n+q}\left[X_{i_j}\in\mathcal V_{n+q}, \forall j=1,\ldots,q\right] \nonumber \\[2mm]
	& = \frac{1}{{{n+q}\choose{q}}}\E^{\otimes n+q}\left[\sum_{1\leq i_1<\ldots<i_q\leq n+q} \mathds 1\left(X_{i_j}\in\mathcal V_{n+q}, \forall j=1,\ldots,q\right)\right] \nonumber \\[2mm]
	& = \frac{1}{{{n+q}\choose{q}}}\E^{\otimes n+q}\left[{R_{n+q}}\choose{q}\right] \nonumber \\[2mm]
	& = \frac{\E^{\otimes n+q}\left[R_{n+q}(R_{n+q}-1)\ldots(R_{n+q}-q+1)\right]}{(n+q)(n+q-1)\ldots(n+1)}, \nonumber
\end{align}
which proves the lemma. \hfill \textsquare

\bibliographystyle{plain}
\bibliography{Biblio}

\end{document}